 \newtheorem{thm}{Theorem}[section]
 \newtheorem{cor}[thm]{Corollary}
 \newtheorem{prop}[thm]{Proposition}
 \theoremstyle{definition}
 \newtheorem{defn}[thm]{Definition}
 \theoremstyle{remark}
 \newtheorem{rem}[thm]{Remark}
 \newtheorem{ex}[thm]{Example}
 \numberwithin{equation}{section}
\newcommand{\1}{{\bf 1}}
\newcommand{\Ad}{{\rm Ad}}
\newcommand{\Aut}{{\rm Aut}}
\newcommand{\Ci}{{\mathcal C}^\infty}
\newcommand{\Diff}{{\rm Diff}\,}
\newcommand{\de}{{\rm d}}
\newcommand{\ee}{{\rm e}}
\newcommand{\ie}{{\rm i}}
\newcommand{\OP}{{\rm OP}}
\newcommand{\Op}{{\rm Op}}
\newcommand{\Sp}{{\rm Sp}}
\renewcommand{\sp}{{{\mathfrak s}{\mathfrak p}}}
\newcommand{\spa}{{\rm span}}
\newcommand{\sym}{{\rm sym}}
\newcommand{\Tr}{{\rm Tr}\,}
\newcommand{\Bc}{{\mathcal B}}
\newcommand{\Cc}{{\mathcal C}}
\newcommand{\Fc}{{\mathcal F}}
\newcommand{\Hc}{{\mathcal H}}
\newcommand{\Lc}{{\mathcal L}}
\newcommand{\Oc}{{\mathcal O}}
\newcommand{\Sc}{{\mathcal S}}
\newcommand{\Tc}{{\mathcal T}}
\renewcommand{\gg}{{\mathfrak g}}
\newcommand{\hg}{{\mathfrak h}}
\newcommand{\zg}{{\mathfrak z}}
\newcommand{\CC}{{\mathbb C}}
\newcommand{\RR}{{\mathbb R}}
\begin{document}

%
%
%
%
%
%
%
%
%

\title[Nilpotent Lie groups]
 {Boundedness for Pseudo-Differential\\ Calculus on Nilpotent Lie Groups}

\author[I. Belti\c t\u a]{Ingrid Belti\c t\u a}

\address{%
Institute of Mathematics  
``Simion Stoilow''  
of the Romanian Academy, 
P.O. Box 1-764, 
Bucharest, 
Romania}

\email{Ingrid.Beltita@imar.ro}

\thanks{This research has been partially supported by the Grant
of the Romanian National Authority for Scientific Research, CNCS-UEFISCDI,
project number PN-II-ID-PCE-2011-3-0131.}

\author[D. Belti\c t\u a]{Daniel Belti\c t\u a}

\address{%
Institute of Mathematics  
``Simion Stoilow''  
of the Romanian Academy, 
P.O. Box 1-764, 
Bucharest, 
Romania}

\email{Daniel.Beltita@imar.ro}

\author[M. Pascu]{Mihai Pascu}

\address{%
University ``Petrol-Gaze'' of Ploie\c sti 
and
Institute of Mathematics
``Simion Stoilow''
of the Romanian Academy, 
P.O. Box 1-764, 
Bucharest
Romania}

\email{Mihai.Pascu@imar.ro}

\subjclass[2010]{Primary 47G30; Secondary 22E25, 47B10}

\keywords{Weyl calculus; Lie group; Calder\'on-Vaillancourt theorem}

\date{September 29, 2012}

\begin{abstract}
We survey a few results on the boundedness of  operators arising from the Weyl-Pedersen  calculus
associated with irreducible representations of nilpotent Lie groups. \end{abstract}

\maketitle

\section{Introduction}

Convolution operators and global Weyl calculus on nilpotent Lie groups have been extensively studied
in many papers, in connection with various problems in partial differential equation and representation theory
(see \cite{CW73, Gl04, Gl07, Go02, How84, Ma91, Me83, Mi82, Mi86, Ra85}). 
It has been repeatedly remarked that the global Weyl calculus is an extension of the classical Weyl calculus on $\RR^n$; 
however, to see this, some further identifications and results are needed (see \cite{Mi82}).
This phenomenon has roots in the fact that the global Weyl calculus is not injective, and thought it is associated to a given 
irreducible representation, the link with the corresponding coadjoint orbit is not clear. 
These issues were resolved by N.V.~Pedersen in \cite{Pe94}, who constructed a Weyl calculus ---that we call Weyl-Pedersen calculus--- associated to an irreducible 
representation of a nilpotent Lie group, which is a bijection between good function spaces of symbols defined on the corresponding orbit and operators defined on the Hilbert space of the representation. 
In addition, this calculus directly extends the classical Weyl calculus.

The aim of this paper is to survey some boundedness results for the Weyl-Pedersen calculus in the case of flat orbits and to give further applications to some three-step nilpotent Lie groups that have non-flat generic orbits. 
The results are generalizations of the classical Calder\'on-Vaillancourt theorem \cite{CV72} and of the Beals characterization of the pseudo-differential operators \cite{Be77} (see also \cite{Bo97}).

Main definitions are given in Section~\ref{section2}, along with an example illustrating the non-injectivity 
of the global Weyl calculus.
The boundedness results are given in Section~\ref{section3}.

Finally, let us mention here that other extensions of the classical Weyl calculus have been constructed
in terms of representations,
for instance the magnetic calculus on $\RR^n$ \cite{IMP07}, and nilpotent Lie groups \cite{BB11a}, Weyl calculus on nilpotent $p$-adic Lie groups \cite{Ha93}, \cite{Bec04}.
 
A good source for the background information on nilpotent Lie groups and their representations is \cite{CG90}. 

\section{Weyl calculi for representations of nilpotent Lie groups}\label{section2}

\subsection*{Preliminaries on nilpotent Lie groups}
Throughout this paper the nilpotent Lie groups are supposed to be connected and simply connected. 
Therefore there is no loss of generality in assuming that a nilpotent Lie group is a pair $G=(\gg,\cdot)$, 
where $\gg$ is a nilpotent Lie algebra (over $\RR$ unless otherwise mentioned) with the Lie bracket 
$[\cdot,\cdot]$, 
and the group multiplication $\cdot$ is given by the Baker-Campbell-Hausdorff series: 
$$(\forall x,y\in\gg)\quad x\cdot y=x+y+\frac{1}{2}[x,y]+\frac{1}{12}([x,[x,y]]+[y,[y,x]])+\cdots$$ 
If the  Lie algebra $\gg$ is nilpotent of step $n$,  the group multiplication $\gg\times\gg\to\gg$, $(x,y)\mapsto x\cdot y$ 
is a polynomial mapping of degree~$n$.
With this identification the exponential from the Lie algebra to the group is then the identity, while the inverse of $x\in \gg$ is $-x$, and the unit element is $0\in \gg$.

We recall that for every $\xi\in\gg^*$ the corresponding coadjoint orbit is 
$$\Oc_\xi:=\{(\Ad^*_G x)\xi\mid x\in \gg\}\simeq G/G_\xi$$
where  $G_\xi:=\{x\in\gg\mid(\Ad^*_G x)\xi=\xi \}$ is the coadjoint isotropy group (or the stabilizer of $\xi$), and 
$\Ad^*_G\colon G\times\gg^*\to\gg^*$ stands for the coadjoint action.
We use the notation $\gg_\xi$ for the corresponding Lie algebra, called the radical of $\xi$.

We will always denote by $\de x$ a fixed Lebesgue measure on $\gg$ and we recall that this is also a two-sided Haar measure on the group $G$.
We let  $\de\xi$ be a Lebesgue measure on $\gg^*$ with the property that if we define the Fourier transform
for every $a\in L^1(\gg^*)$ by 
$$(\Fc a)(x)=\int\limits_{\gg^*}\ee^{-\ie\langle\xi,x\rangle}a(\xi)\de\xi,$$
then we get a unitary operator $\Fc\colon L^2(\gg^*,\de\xi)\to L^2(\gg,\de x)$. 
We also denote by $\Sc(\gg)$ ( $\Sc(\gg^*))$ the Schwartz space on $\gg$ (respectively $\gg^*$), by $\Sc'(\gg)$ ($\Sc'(\gg^*))$) 
 its topological dual consisting of the tempered distributions, 
and by $\langle\cdot,\cdot\rangle\colon\Sc'(\gg)\times\Sc(\gg)\to\CC$ the corresponding duality pairing. 
The Fourier transform extends to a linear topological isomorphism $\Fc\colon \Sc'(\gg^*)\to\Sc'(\gg)$.


We recall that according to a theorem of Kirillov (see \cite[Chap~2]{CG90}), there exists a natural one-to-one correspondence between the coadjoint orbits and the equivalence classes of unitary irreducible representations of a nilpotent, connected and simply connected Lie group $G$. 

\begin{ex}\label{heis}
\normalfont
For every integer $n\ge1$ let $(\cdot\mid\cdot)$ denote 
the Euclidean scalar product on ${\mathbb R}^n$. 
The \emph{Heisenberg algebra} is 
${\mathfrak h}_{2n+1}={\mathbb R}^n\times{\mathbb R}^n\times{\mathbb R}$ 
with the bracket 
$[(q,p,t),(q',p',t')]=[(0,0,(p\mid q')-(p'\mid q))]$. 
The \emph{Heisenberg group} is 
${\mathbb H}_{2n+1}=({\mathfrak h}_{2n+1},\cdot)$  
with the multiplication
$x\cdot y=x+y+\frac{1}{2}[x,y]$. 
If we identify the dual space $\hg_{2n+1}^*$ with ${\mathbb R}^n\times{\mathbb R}^n\times{\mathbb R}$ 
in the usual way, then every coadjoint orbit belongs to one of the following families: 
\begin{itemize}
\item[i)] the affine hyperplanes $\Oc_\hbar={\mathbb R}^n\times{\mathbb R}^n\times\{1/\hbar\}$ with $\hbar\in\RR\setminus\{0\}$; 
\item[ii)] the singletons $\Oc_{a, b}=\{(a,b,0)\}$ with $a,b\in\RR^n$. 
\end{itemize}
For every $\hbar\in\RR\setminus\{0\}$ there is a unitary irreducible representation on the Hilbert space $L^2(\RR^n)$, 
namely 
$\pi_\hbar\colon{\mathbb H}_{2n+1}\to{\mathcal B}(L^2({\mathbb R}^n))$ 
defined by  
\begin{equation}\label{sch_eq}
(\pi_\hbar(q,p,t)f)(x)={\rm e}^{{\rm i}((p\mid x)+\frac{1}{2}(p\mid q)+\frac{t}{\hbar})}f(q+x) 
\text{ for a.e. }x\in{\mathbb R}^n
\end{equation}
for arbitrary $f\in L^2({\mathbb R}^n)$ and $(q,p,t)\in{\mathbb H}_{2n+1}$. 
This is the \emph{Schr\"o\-dinger representation} of the Heisenberg group ${\mathbb H}_{2n+1}$, and corresponds to 
the coadjoint orbit $\Oc_\hbar$ in the first family above. 
Moreover, for every $a,b\in\RR^n$ there is a unitary irreducible representation on a 1-dimensional Hilbert space, 
namely 
$\pi_{(a,b)}\colon{\mathbb H}_{2n+1}\to U(1):=\{z\in\CC\mid\vert z\vert=1\}$ 
defined by 
$$\pi_{(a,b)} (q,p,t)={\rm e}^{{\rm i}((p\mid a)+(q\mid b))}$$
for all $(q,p,t)\in{\mathbb H}_{2n+1}$, and corresponds to the orbit $\Oc_{a,b}$ in the second family.
\end{ex}

\subsection*{Weyl calculi on nilpotent Lie groups.} 
\begin{defn}[\cite{An69}, \cite{An72}]
\normalfont
Let $G=(\gg,\cdot)$ be a nilpotent Lie group. 
If $\pi\colon\gg\to\Bc(\Hc)$ is a unitary representation of $G$,  then we can use Bochner integrals 
for extending it to a linear mapping 
$$\pi\colon L^1(\gg)\to\Bc(\Hc),\quad \pi(b)v=\int\limits_{\gg}b(x)\pi(x)v\de x
\text{ if $b\in L^1(\gg)$ and $v\in\Hc$.}$$ 
\end{defn}
Let $\Tc$ be  a locally convex space  which is continuously and densely embedded in $\Hc$,  
and has the property that for every $h, \chi \in\Tc$  
we have $(\pi(\cdot)h\mid\chi)\in\Sc(\gg)$.
Then we can further extend $\pi$ to a mapping 
$$\begin{aligned}
\pi\colon\Sc'(\gg)\to\Lc(\Tc,\Tc^*),\quad 
& (\pi(u)h\mid \chi) =\langle u, (\pi(\cdot)h\mid \chi)\rangle  \\
&\text{ if $u\in\Sc'(\gg)$, $h\in\Tc$, and $\chi\in\Tc$.}
\end{aligned}$$ 
where $\Tc^*$ is the strong antidual of $\Tc$, and $(\cdot\mid\cdot)$ denotes the anti-duality between $\Tc$ and $\Tc^*$ that extends the 
scalar product of $\Hc$.  
Here we have used the notation $\Lc(\Tc,\Tc^*)$ for the space of continuous linear operators between the above spaces 
(these operators are thought of as possibly unbounded linear operators in $\Hc$).

%
In the same setting, the \emph{global Weyl calculus for the representation}~$\pi$ is the mapping
$$\OP\colon\Fc^{-1}L^1(\gg)\to\Bc(\Hc),\quad 
\OP(a)=\pi(\Fc a)$$
which further extends to 
$$\OP\colon\Sc'(\gg^*)\to\Lc(\Tc,\Tc^*),\quad \OP:=\pi\circ\Fc^{-1}.$$

\begin{ex}\label{reg}
\normalfont
Consider the (left) regular representation
$$\lambda\colon\gg\to\Bc(L^2(\gg)),\quad (\lambda(x)\phi)(y)=\phi((-x)\cdot y).$$
If we extend it as above to $\lambda\colon L^1(\gg)\to\Bc(L^2(\gg))$ then we obtain 
for $b\in L^1(\gg)$ and $\psi\in L^2(\gg)$
$$(\lambda(b)\psi)(y)=\int \limits_{\gg}b(x)\psi((-x)\cdot y)\de x=(b\ast\psi)(y)$$
hence we recover the convolution product, which makes $L^1(\gg)$ into a Banach algebra. 

The above construction of the global Weyl calculus for the regular representation 
is usually considered with  $\Tc=\Sc(\gg)$,  yielding 
$$\OP\colon\Sc'(\gg^*)\to\Lc(\Sc(\gg),\Sc'(\gg))$$ 
and the related mapping 
$$\lambda\colon\Sc'(\gg)\to\Lc(\Sc(\gg),\Sc'(\gg))$$
whose values are the (possibly unbounded) convolution operators 
on the nilpotent Lie group~$G$. 
\end{ex}

In the case of the global Weyl calculus the symbol of an operator in the representation space of an irreducible representation 
may not be uniquely determined 
on the corresponding coadjoint orbit, unlike in the case of pseudo-differential Weyl calculus on $\RR^n$ or two step nilpotent groups.
(See  Example~\ref{classical} and also \cite{Mi86}. )
The Kirillov character formula says that when $a\in \Sc(\gg^*)$, and $\pi\colon G\mapsto \Bc(\Hc)$ is a irreducible representation, then $\OP(a)$ is a trace class operator and there exists a constant that depends on the unitary class of equivalence 
of $\pi$ only, such that 
$$\Tr(\OP(a))= C\int\limits_{\Oc} a(\xi) \de \xi,$$
where $\Oc$ is the coadjoint orbit corresponding to $\pi$. 
This seems to suggest that $\OP(a)$, when $a\in C^\infty(\gg)\cap \Sc'(\gg)$, depends only on the restriction of $a$ to the coadjoint orbit $\Oc$. 
This is not always the case, as it could be seen from the next example~\cite[App. Sect. I]{Ma91}; see also \cite[Ex. N4N1, pag. 9--10]{Pe88}.

\begin{ex}\label{manch}
\normalfont
Let $\gg$ be the 4-dimensional threadlike (or filiform) Lie algebra. 
Equivalently, $\gg$ is 3-step nilpotent, 4-dimensional, and its center is 1-dimensional. 
Then $\gg$ has a Jordan-H\"older basis $\{X_1,X_2,X_3,X_4\}$ satisfying the commutation relations 
$[X_4,X_3]=[X_4,X_2]=X_1$ and $[X_j,X_k]=0$ if $1\le k<j\le 4$  with $(j,k)\not\in\{(4,3),(4,2)\}$. 
Let $\Oc$ be the coadjoint orbit of the functional 
$$\xi_0\colon\gg\to\RR,\quad \xi_0(t_1X_1+t_2X_2+t_3X_3+t_4X_4)=t_1.$$
Then $\dim\Oc=2$, and if we identify $\gg^*$ to $\RR^4$ by using the basis dual to $\{X_1,X_2,X_3,X_4\}$, 
then we have 
$\Oc=\{(1,-t,\frac{t^2}{2},s)\mid s,t\in\RR\}\subset\RR^4$. 
A unitary irreducible representation 
$\pi\colon\gg\to\Bc(L^2(\RR))$ associated with the coadjoint orbit $\Oc$ can be defined by 
$$\de\pi(X_1)=\ie\1,\quad\de\pi(X_2)=-\ie t,\quad 
\de\pi(X_3)=\frac{\ie t^2}{2},\quad\de\pi(X_3)=-\frac{\de}{\de t},$$
where we denote by $t$ both the variable of the functions in $L^2(\RR)$ 
and the operator of multiplication by this variable in  $L^2(\RR)$. 

It is clear that the function $a\colon\RR^4\to\RR$, $a(y_1,y_2,y_3,y_4)=y_4^2(2y_3-y_2^2)$, 
vanishes on the coadjoint orbit $\Oc$, and on the other hand 
it was noted in \cite[pag. 236]{Ma91} that $\OP^\pi(a)=-\frac{1}{6}\1$. 
\end{ex}

N.V.~Pedersen introduced in \cite{Pe94} an orbital Weyl calculus that is specific to a given orbit, or equivalently, to a class of unitary irreducible representations
and that, in addition, gives isomorphism between	Schwartz symbols defined on the orbit and regularizing operators defined in the space of the representation. 
The calculus may depend on the choice of a  Jordan-H\"older basis.

To describe this Weyl-Pedersen calculus we need first some notation.
Let $G=(\gg,\cdot)$ be a nilpotent Lie group of dimension $m\ge 1$ and assume that 
$\{X_1,\dots,X_m\}$ is a Jordan-H\"older basis in $\gg$;  
so for $j=1,\dots,m$ if we define $\gg_j:=\spa\{X_1,\dots,X_j\}$ then $[\gg,\gg_j]\subseteq\gg_{j-1}$, 
where $\gg_0:=\{0\}$. 
Let  $\pi\colon\gg\to\Bc(\Hc)$ be a unitary representation of $G$ 
associated with a coadjoint orbit $\Oc\subseteq\gg^*$. 
Pick $\xi_0\in\Oc$, denote 
$e:=\{j\mid X_j\not\in\gg_{j-1}+\gg_{\xi_0}\}$, and then define $\gg_e:=\spa\{X_j\mid j\in e\}$. 
We have $\gg=\gg_{\xi_0}\dotplus\gg_e$ and the mapping $\Oc\to\gg_e^*$, $\xi\mapsto\xi\vert_{\gg_e}$, 
is a diffeomorphism. 
Hence we can define an orbital Fourier transform $\Sc'(\Oc)\to\Sc'(\gg_e)$,   $a\mapsto\widehat{a}$ 
which is a linear topological isomorphism  and such that for every $a\in\Sc(\Oc)$ we have 
$$(\forall X\in\gg_e)\quad \widehat{a}(X)=\int\limits_{\Oc}\ee^{-\ie\langle\xi,X\rangle}a(\xi)\de\xi.$$
Here we have the Lebesgue measure $\de x$ on $\gg_e$ corresponding to the basis $\{X_j\mid j\in e\}$ 
and $\de\xi$ is the Borel measure on $\Oc$ such that 
the aforementioned diffeomorphism $\Oc\to\gg_e^*$ is a measure preserving mapping 
and the Fourier transform $L^2(\Oc)\to L^2(\gg_e)$ 
is unitary. 
The inverse of this orbital Fourier transform is denoted by $a\mapsto\check{a}$. 

\begin{defn}[\cite{Pe94}]\label{ped}
\normalfont With the notation above, 
the \emph{Weyl-Pedersen calculus}  associated to the unitary irreducible representation $\pi$ is the mapping
$$\Op_{\pi}\colon\Sc(\Oc)\to\Bc(\Hc),\quad 
\Op_{\pi}(a)=\int\limits_{\gg_e}\widehat{a}(x)\pi(x)\de x. $$
The space of smooth vectors $\Hc_\infty:=\{v\in\Hc\mid\pi(\cdot)v\in\Ci(\gg,\Hc)\}$ 
is dense in $\Hc$ and has the natural topology of a nuclear Fr\'echet space 
with the space of the antilinear functionals denoted by $\Hc_{-\infty}:={\Hc}_\infty^*$ 
(with the strong dual topology). 
One can show that the Weyl-Pedersen calculus extends to a linear bijective mapping 
$$\Op_{\pi} \colon\Sc'(\Oc)\to\Lc(\Hc_\infty,\Hc_{-\infty}),\quad 
(\Op_{\pi} (a) v\mid w)=\langle \widehat{a},(\pi(\cdot)v\mid w)\rangle$$
for $a\in\Sc'(\Oc)$, $v,w\in\Hc_\infty$, where in the left-hand side  $(\cdot\mid\cdot)$ denotes 
the extension of the scalar product of $\Hc$ to the sesquilinear duality pairing 
between $\Hc_\infty$ and $\Hc_{-\infty}$. 
\end{defn}

Note that in fact $\Op_{\pi}$ is associated to the coadjoint orbit corresponding to the representation
$\pi$. 
Indeed if $\pi_1$ and $\pi$ are unitary equivalent representations, $\Op_{\pi}$ and $\Op_{\pi_1}$ are defined on the same space of symbols, there is a unitary operator $U$ such that $\Op_{\pi_1}(a) = U^*\Op_{\pi}(a) U$ for every $a\in \Sc(\Oc)$, and this equality extends naturally to $a\in \Sc'(\Oc)$. 
Therefore, whenever the orbit $\Oc$ is fixed and no confussion may arise, we use the notation $\Op$ instead of $\Op_{\pi}$, for any irreducible representation corresponding to $\Oc$. 

\begin{rem}\label{link}
The map $\Sc(\gg^*) \to \Lc(\Hc_{-\infty},\Hc_{\infty})$,   $a\mapsto\OP(a)$ is surjective \cite{How84}, while 
$\Sc(\Oc) \to   \Lc(\Hc_{-\infty},\Hc_{\infty})$,   $a\mapsto\Op_\pi(a)$ is bijective \cite{Pe94}. 
In fact, it follows by \cite[Thm.2.2.7]{Pe94} that $\OP(a)=\Op_\pi(b)$, where
$$ \hat b(x) = C_{\Oc, e}\mathrm{Tr} (\pi(-x) \OP(a)), \quad x\in \gg_e,$$
and $C_{\Oc, e}$ is a constant that depends on the Jordan-H\"older basis and on the coadjoint orbit $\Oc$.
\end{rem}

\begin{ex}\label{classical}
\normalfont 
The Weyl-Pedersen calculus for the Schr\"odinger representation of the Heisenberg group ${\mathbb H}_{2n+1}$ 
is just the usual Weyl calculus from the theory of partial differential equations on $\RR^n$, 
as developed for instance in \cite{Hor79} and \cite{Hor07}. 
In this case we have $\Hc=L^2(\RR^n)$ and $\Hc_\infty=\Sc(\RR^n)$. 
\end{ex}

\section{Boundedness for the orbital Weyl calculus}\label{section3}

Let $G=(\gg,\cdot)$ be a nilpotent Lie group with a unitary irreducible representation $\pi\colon\gg\to\Bc(\Hc)$ 
associated with the coadjoint orbit $\Oc\subseteq\gg^*$. 
One proved in \cite{BB11b} that if the symbols belong to suitable modulation spaces $M^{\infty,1}\hookrightarrow\Sc'(\Oc)$, 
then the corresponding values of the Weyl-Pedersen calculus belong to $\Bc(\Hc)$. 
This condition does not require smoothness properties of symbols. 

We will now describe a result established in \cite{BB12} which extends both the classical Calder\'on-Vaillancourt theorem 
and Beals' criterion on the Weyl calculus, 
by linking growth properties of derivatives of smooth symbols 
to boundedness properties of the corresponding pseudo-differential operators in the case when the coadjoint orbit $\Oc$ is \emph{flat}, 
in the sense that for some $\xi_0\in\Oc$ we have $\Oc=\{\xi\in\gg^*\mid\xi\vert_{\zg}=\xi_0\vert_{\zg}\}$, 
where $\zg$ is the center of~$\gg$. 
This is equivalent to the condition $\dim\Oc=\dim\gg-\dim\zg$, 
and it is also equivalent to the fact that the representation $\pi$ is square integrable modulo the center of $G$. 
Since this hypothesis on coadjoint orbits might look more restrictive than it really is, 
we recall that every nilpotent Lie group embeds into a nilpotent Lie group 
whose generic coadjoint orbits (that is, the ones of maximal dimension) are flat \cite[Ex. 4.5.14]{CG90}. 

If $\Oc$ is a generic flat coadjoint orbit, 
then the Weyl-Pedersen calculus $\Op\colon\Sc'(\Oc)\to\Lc(\Hc_\infty,\Hc_{-\infty})$ 
is a linear topological isomorphism which is uniquely determined by the condition 
that for every $b\in\Sc(\gg)$ we have  
$$\Op((\Fc^{-1}b)\vert_{\Oc})=\int\limits_{\gg}\pi(x) b(x)\de x,$$ 
(see \cite[Th. 4.2.1]{Pe94}). 

We define $\Diff(\Oc)$ as the space of all linear differential operators $D$ on $\Oc$ 
which are \emph{invariant} to the coadjoint action, in the sense that   
 $$
(\forall x\in\gg)(\forall a\in C^\infty(\Oc))\quad  D(a\circ\Ad_G^*(x)\vert_{\Oc})=(Da)\circ\Ad_G^*(x)\vert_{\Oc}.$$
Let us consider the Fr\'echet space of symbols
\begin{equation}\label{symbols}
\Cc^\infty_b(\Oc) =\{a\in C^\infty(\Oc)\mid  Da\in L^\infty(\Oc)\;\text{for all} \; D\in\Diff(\Oc)\},
\end{equation} 
with the topology given by the seminorms $\{a\mapsto\Vert Da\Vert_{L^\infty(\Oc)}\}_{D\in\Diff(\Oc)}$.

\begin{thm}[\cite{BB12}]\label{main_introd}
Let $G$ be a connected, simply connected, nilpotent Lie group whose generic coadjoint orbits are flat. 
Let $\Oc$ be such an orbit with a corresponding unitary irreducible representation $\pi\colon G\to\Bc(\Hc)$.
Then for $a\in C^\infty(\Oc)$ we have 
$$a\in \Cc^\infty_b(\Oc)\iff(\forall D\in\Diff(\Oc)) \quad \Op(Da)\in\Bc(\Hc).$$ 
Moreover the Weyl-Pedersen calculus defines a continuous linear map 
$$\Op\colon \Cc^{\infty}_b(\Oc)\to\Bc(\Hc),$$
and the Fr\'echet topology of $C_b^\infty(\Oc)$ is equivalent to that defined by the family of seminorms 
$\{a\mapsto\Vert\Op(Da)\Vert\}_{D\in\Diff(\Oc)}$. 
\end{thm}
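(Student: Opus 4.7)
The plan is to imitate Beals' commutator characterization of the classical Weyl calculus, using the flatness of $\Oc$ to linearize the problem. Fix $\xi_0\in\Oc$ and a Jordan-H\"older basis adapted to the flat orbit, so that $\gg_{\xi_0}=\zg$, $\gg=\zg\dotplus\gg_e$, and restriction identifies $\Oc$ with the affine subspace $\xi_0\vert_\zg+\gg_e^*$ of $\gg^*$. In these linear coordinates the coadjoint action is by polynomial maps fixing $\xi_0\vert_\zg$, and the fundamental vector fields
$$X^{\#}a(\xi):=\frac{\de}{\de t}\bigg|_{t=0} a\bigl((\Ad_G^*\exp(-tX))\xi\bigr),\qquad X\in\gg,$$
span the tangent space to $\Oc$ at every point; consequently $\Diff(\Oc)$, as a filtered algebra, is generated by the first-order operators $X^{\#}$ with $X\in\gg_e$.

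Second, I would establish the commutation identity
$$[\de\pi(X),\Op(a)]=\ie\,\Op(X^{\#}a)\qquad(X\in\gg,\ a\in\Sc'(\Oc))$$
by differentiating at $t=0$ the covariance relation $\pi(tX)\Op(a)\pi(-tX)=\Op(a\circ\Ad_G^*(-tX))$, which follows directly from Definition~\ref{ped}. Iterating, for every $D\in\Diff(\Oc)$ the operator $\Op(Da)$ is realized as a fixed iterated commutator of $\Op(a)$ with operators $\de\pi(X_j)$. The two conditions in the theorem are thereby translated into two sides of a Beals-type statement: uniform $L^\infty$-control of all invariant derivatives of $a$ on one side, and $\Bc(\Hc)$-boundedness of all iterated commutators $\ad_{\de\pi(X_{j_1})}\cdots\ad_{\de\pi(X_{j_r})}\Op(a)$ on the other.

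Third, the forward implication $a\in\Cc^\infty_b(\Oc)\Rightarrow\Op(Da)\in\Bc(\Hc)$ I would derive from the modulation-space boundedness result of \cite{BB11b}, which provides a continuous map $\Op\colon M^{\infty,1}(\Oc)\to\Bc(\Hc)$. A short estimate using a Schwartz window on $\gg_e^*$ and repeated integration by parts shows that $\Cc^\infty_b(\Oc)\hookrightarrow M^{\infty,1}(\Oc)$ continuously, and since $\Diff(\Oc)$ preserves $\Cc^\infty_b(\Oc)$, this yields simultaneously $\Op(Da)\in\Bc(\Hc)$ for every $D$ and the continuity of $\Op\colon\Cc^\infty_b(\Oc)\to\Bc(\Hc)$.

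Fourth, the converse is the main obstacle, and the strategy is to reduce it to the classical Beals theorem on a symplectic vector space. Choose a Jordan-H\"older-adapted symplectic basis $\{U_j,V_j\}_{j=1}^k$ of $\gg_e$ for the Kirillov form $\omega_{\xi_0}(X,Y)=\langle\xi_0,[X,Y]\rangle$, which is nondegenerate on $\gg_e$ precisely because $\Oc$ is flat. Realizing $\pi$ by induction from a polarization containing $\zg$ models it on $L^2(\RR^k)$, with $\ie\de\pi(U_j)$ and $\ie\de\pi(V_j)$ being polynomial perturbations, of degree strictly less than the nilpotency step, of the standard position and momentum operators on $\RR^k$. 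The commutator identity then identifies iterated commutators of $\Op(a)$ with these operators with $\Op$ of iterated $X^{\#}$-derivatives of $a$, so that an inductive argument on the nilpotency step lets one peel off the lower-order perturbations and invoke Beals' characterization on $\RR^{2k}$; this produces the required bounds $\|Da\|_{L^\infty(\Oc)}<\infty$ together with a quantitative estimate, which combined with the forward continuity upgrades to the asserted equivalence of Fr\'echet topologies by a closed-graph argument. The delicate point is exactly this inductive disentanglement of the lower-order terms in $\de\pi(X)$ for step $\ge 3$, where flatness is essential both to secure the symplectic structure on $\gg_e$ and to ensure that $\Oc$ is parametrized linearly so the reduction to classical Beals actually closes.
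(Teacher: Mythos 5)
Your outline has the right general flavour (covariance/commutator identity, modulation spaces for the positive direction, a Beals-type converse), and indeed the survey itself gives no proof of Theorem~\ref{main_introd} --- it is quoted from \cite{BB12}, whose positive direction does go through the modulation-space boundedness of \cite{BB11b}, as in your third step. But there is a genuine gap at the very first step, and it propagates through the whole argument. The operators $X^{\#}$ with $X\in\gg_e$ do \emph{not} generate $\Diff(\Oc)$; in general they do not even belong to $\Diff(\Oc)$. The push-forward of $X^{\#}$ under $\Ad_G^*(g)$ is $(\Ad_G(g)X)^{\#}$, so $X^{\#}$ commutes with the coadjoint action if and only if $(\Ad_G(g)X-X)^{\#}=0$ for all $g$, i.e.\ if and only if $[\gg,X]\subseteq\gg_\xi=\zg$ (flatness gives $\gg_\xi=\zg$ at every point of $\Oc$). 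Thus all $X^{\#}$, $X\in\gg_e$, are invariant only when $[\gg_e,\gg_e]\subseteq\zg$, i.e.\ only for two-step groups; for flat orbits of groups of step $\ge 3$ --- precisely the cases not already covered by the classical Calder\'on--Vaillancourt/Beals theorems --- some $X^{\#}$ fails to be invariant. The algebra $\Diff(\Oc)$ comes instead from the commuting (``right'') action of $G$ on $\Oc\simeq G/G_{\xi_0}$, and rewriting one family of derivatives in terms of the other introduces polynomial, hence unbounded, coefficients on the noncompact orbit. Consequently your ``translation'' of both sides of the theorem into statements about iterated commutators with $\de\pi(X_j)$ (equivalently about $X_{j_1}^{\#}\cdots X_{j_r}^{\#}a$) is unjustified: boundedness of $Da$ (or of $\Op(Da)$) for all $D\in\Diff(\Oc)$ is not manifestly equivalent to the corresponding statement for iterated $X^{\#}$-derivatives, and an argument comparing the two families is exactly what a proof must supply. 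The same mismatch undermines your claimed embedding $\Cc^\infty_b(\Oc)\hookrightarrow M^{\infty,1}(\Oc)$: the integrations by parts in the linear coordinates of $\gg_e^*$ need bounds on coordinate derivatives, which are not controlled by invariant derivatives when the step is $\ge 3$. (The covariance identity $[\de\pi(X),\Op(a)]=\Op(X^{\#}a)$ itself is correct --- without the factor $\ie$ --- and is indeed a useful ingredient.)

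Concerning the converse, which you yourself single out as the main obstacle, the proposal stops exactly where the work begins: the ``inductive disentanglement'' of the polynomial perturbations of the position and momentum operators is asserted, not performed, and even if it were carried out it would at best yield bounds on coordinate derivatives of $a$ in the chart $\Oc\simeq\gg_e^*$, not on $Da$ for $D\in\Diff(\Oc)$; without the missing comparison between coordinate (or $X^{\#}$-) derivatives and invariant derivatives, the chain of equivalences does not close, and the final closed-graph upgrade to the equivalence of Fr\'echet topologies has nothing to rest on. So while your plan reduces correctly to the classical proof in the Heisenberg (two-step) situation, the new content of the theorem --- flat orbits of higher step, where $\Diff(\Oc)$ is genuinely different from the algebra generated by the coadjoint vector fields --- is precisely where the argument breaks down.
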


Let $\Cc^\infty_\infty(\Oc)$ be the space of all $a\in \Cc^\infty(\Oc)$ such that 
the function $Da$ vanishes at infinity on $\Oc$, for every $D\in \Diff(\Oc)$. 
It easily follows by
 the above theorem that if $a \in  \Cc^\infty(\Oc)$ then 
 $$ a\in\Cc_{\infty}^{\infty}(\Oc) \iff (\forall D\in\Diff(\Oc))\quad  \Op(Da) \text{ is a compact operator}.$$
If $\pi$ is the Schr\"odinger representation of the $(2n+1)$-dimensional 
Heisenberg group, then Theorem~\ref{main_introd} 
characterizes the symbols of type $S^{0}_{0,0}$ 
for the pseudo-differential Weyl calculus 
$\Op\colon\Sc'(\RR^{2n})\to\Lc(\Sc(\RR^n),\Sc'(\RR^n))$.  
Name\-ly,  for any symbol $a\in C^\infty(\RR^{2n})$ we have 
$$(\forall\alpha\in{\mathbb N}^{2n})\quad\partial^\alpha a\in L^\infty(\RR^{2n})\iff 
(\forall\alpha\in{\mathbb N}^{2n})\quad \Op(\partial^\alpha a)\in\Bc(L^2(\RR^n)),$$  
where  $\partial^\alpha $ stand as usually for the partial derivatives. 
The above statement unifies the Calder\'on-Vaillancourt theorem and the so-called Beals' criterion for the Weyl calculus. 

\subsection*{Application to $3$-step nilpotent Lie groups}

\begin{prop}\label{new}
Let $G=(\gg,\cdot)$ be a nilpotent Lie group with an irreducible representation $\pi\colon\gg\to\Bc(\Hc)$ 
associated with the coadjoint orbit $\Oc\subseteq\gg^*$. 
If $H=(\hg,\cdot)$ is a normal subgroup of $G$, then the following assertions are equivalent: 
\begin{enumerate}
\item\label{new_item1} The restricted representation $\pi\vert_{\hg}\colon\hg\to\Bc(\Hc)$ is irreducible. 
\item\label{new_item2} The mapping $\gg^*\to\hg^*$, $\xi\mapsto\xi\vert_{\hg}$ 
gives a diffeomorphism of $\Oc$ onto a coadjoint orbit of $H$, which will be denoted by $\Oc\vert_{\hg}$. 
\end{enumerate}
If this is the case, then the irreducible representation $\pi\vert_{\hg}$ is associated with 
the coadjoint orbit $\Oc\vert_{\hg}$ of $H$. 
\end{prop}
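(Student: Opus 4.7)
The plan is to translate condition \eqref{new_item2} into Lie-algebraic data and then exploit the Kirillov orbit method. Normality of $H$ in $G$ gives a natural $G$-action on $\hg$ by $\Ad_{G}$, hence on $\hg^*$, and the restriction $r\colon\gg^*\to\hg^*$, $\zeta\mapsto\zeta|_{\hg}$, is $G$-equivariant. Fix $\xi\in\Oc$, set $\eta:=r(\xi)$, and write $G_{\xi}$, $G_{\eta}$ for the stabilizers of $\xi$ (under the coadjoint action) and of $\eta$ (under the induced $G$-action on $\hg^*$); note $G_{\xi}\subseteq G_{\eta}$. Then $r(\Oc)=G\cdot\eta$ is a single $G$-orbit in $\hg^*$ which decomposes as a union of $H$-orbits, and $r|_{\Oc}$ factors as $G/G_{\xi}\to G/G_{\eta}\simeq r(\Oc)$. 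A direct check then shows that \eqref{new_item2} is equivalent to the two conditions $G_{\xi}=G_{\eta}$ (so $r|_{\Oc}$ is injective) and $G=HG_{\eta}$ (so the $G$-orbit $r(\Oc)$ coincides with the $H$-orbit $\Oc|_{\hg}:=H\cdot\eta$).

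For \eqref{new_item2}$\Rightarrow$\eqref{new_item1}, I would choose a Jordan-H\"older basis of $\gg$ passing through $\hg$ and take the associated Vergne polarization $\mathfrak{p}\subseteq\gg$ for $\xi$; by construction $\mathfrak{p}_{\hg}:=\mathfrak{p}\cap\hg$ is a polarization for $\eta$ in $\hg$. Setting $P=(\mathfrak{p},\cdot)$ and $P_{\hg}=(\mathfrak{p}_{\hg},\cdot)=P\cap H$, Kirillov's theorem realizes $\pi\cong\mathrm{Ind}_{P}^{G}\chi_{\xi}$ with $\chi_{\xi}(p)=\ee^{\ie\xi(p)}$, and the irreducible representation of $H$ attached to $\Oc|_{\hg}$ as $\mathrm{Ind}_{P_{\hg}}^{H}\chi_{\eta}$, where $\chi_{\eta}=\chi_{\xi}|_{P_{\hg}}$. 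Condition \eqref{new_item2} translates into $G=HG_{\xi}$, and since $G_{\xi}\subseteq P$ this forces $G=HP$; hence $H\backslash G/P$ is a single point, and Mackey's restriction formula produces $\pi|_{\hg}\cong\mathrm{Ind}_{H\cap P}^{H}(\chi_{\xi}|_{H\cap P})=\mathrm{Ind}_{P_{\hg}}^{H}\chi_{\eta}$, which is irreducible and associated with $\Oc|_{\hg}$.

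For the converse \eqref{new_item1}$\Rightarrow$\eqref{new_item2}, I would let $\widetilde{\Oc}\subseteq\hg^*$ be the $H$-coadjoint orbit attached to $\pi|_{\hg}$ (unique by Kirillov on $H$) and compare the Kirillov character formulas on $G$ and on $H$. For $\phi\in\Sc(\hg)$ with $\pi|_{\hg}(\phi)$ trace class, the formula on $H$ reads $\Tr\pi|_{\hg}(\phi)=c_{H}\int_{\widetilde{\Oc}}\widehat{\phi}\,\de\mu_{\widetilde{\Oc}}$, while regarding $\phi$ as a tempered distribution on $\gg$ supported on $\hg$ and applying the formula on $G$ gives $\Tr\pi(\phi)=c\int_{\Oc}\widehat{\phi}\circ r\,\de\mu_{\Oc}=c\int_{\hg^*}\widehat{\phi}\,\de(r_{*}\mu_{\Oc})$. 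Varying $\phi$ forces $r_{*}\mu_{\Oc}$ to be a constant multiple of $\mu_{\widetilde{\Oc}}$; comparing supports (coadjoint orbits of simply connected nilpotent groups are closed) yields $r(\Oc)=\widetilde{\Oc}$, and comparing dimensions gives $G_{\xi}=G_{\eta}$, so the $G$-equivariant map $r|_{\Oc}\colon G/G_{\xi}\to G/G_{\eta}$ is a diffeomorphism onto $\widetilde{\Oc}=\Oc|_{\hg}$. The main technical obstacle here is ensuring the trace-class hypothesis on a class of $\phi$ rich enough to pin down $r_{*}\mu_{\Oc}$, and making the dimension comparison rigorous; as a fallback I would invoke the Corwin--Greenleaf decomposition of $\pi|_{\hg}$ as a direct integral indexed by $H$-orbits in $r(\Oc)$, with multiplicities read off from the fibers of $r|_{\Oc}$, from which irreducibility of $\pi|_{\hg}$ immediately forces \eqref{new_item2}.
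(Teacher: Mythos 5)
Your proposal is correct in substance and, in its load--bearing parts, follows the same route that the paper itself only sketches: the paper proves \eqref{new_item1}$\Rightarrow$\eqref{new_item2} by iterating the codimension--one restriction theorem \cite[Th.\ 2.5.1]{CG90} along a Jordan--H\"older sequence passing through $\hg$, and \eqref{new_item2}$\Rightarrow$\eqref{new_item1} via Vergne polarizations, deferring all details. Your argument for \eqref{new_item2}$\Rightarrow$\eqref{new_item1} is exactly the intended one: note that the claim that $\mathfrak{p}\cap\hg$ is a polarization of $\eta=\xi\vert_{\hg}$, which you assert ``by construction'', is indeed automatic but deserves the one--line reason that $\mathfrak{p}\cap\hg$ is $B_\eta$-isotropic and contains the Vergne polarization of $\eta$ built from the truncated flag, hence coincides with it; then your reformulation of \eqref{new_item2} as $G_\xi=G_\eta$ and $G=HG_\eta$ gives $G=HP$, and the Mackey subgroup theorem (one double coset, since $HP$ is a closed subgroup because $H$ is normal) yields $\pi\vert_H\cong\mathrm{Ind}_{P\cap H}^{H}\chi_\eta$, irreducible and attached to $\Oc\vert_{\hg}$, which also proves the final assertion of the proposition. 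For the converse, your primary character--formula route has the genuine gap you flag yourself: the Kirillov formula on $G$ applies to functions in $\Sc(\gg)$, and pairing the character distribution of $\pi$ with the measure $\phi\,\de x$ carried by the submanifold $\hg$ (equivalently, restricting the character to $\hg$ and identifying it with the Fourier transform of $r_*\mu_{\Oc}$) is not justified as stated, so it should not be relied on. Your fallback, however, is precisely the paper's argument: the Corwin--Greenleaf restriction theory, which is what iterating \cite[Th.\ 2.5.1]{CG90} produces, decomposes $\pi\vert_H$ as a direct integral over the $H$-orbits in $r(\Oc)$, and irreducibility forces a single orbit with multiplicity one, i.e.\ \eqref{new_item2}. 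Alternatively, your own Mackey computation already closes this direction without characters: if $G\neq HP$ then $\pi\vert_H$ is a direct integral over the non-atomic space $G/HP$ and cannot be irreducible; and once $G=HP$, the count $\dim(\mathfrak{p}\cap\hg)=\dim\hg-\tfrac12\dim\Oc$ together with $\mathfrak{p}\cap\hg$ being a polarization of $\eta$ forces $\dim(H\cdot\eta)=\dim\Oc$, hence $G_\xi=G_\eta$ (stabilizers being connected) and $G\cdot\eta=H\cdot\eta$, which is \eqref{new_item2}.
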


\begin{proof}
Pick any Jordan-H\"older sequence that contains~$\hg$. 
Then the implication $\ref{new_item1}.\Rightarrow\ref{new_item2}.$ 
follows at once by iterating \cite[Th. 2.5.1]{CG90}, 
while the converse implication follows by using Vergne polarizations. 
The details of the proof will be given elsewhere.  
\end{proof}

\begin{rem}\label{new_rem}
\normalfont
In the setting of Proposition~\ref{new}, if $X_1,\dots,X_m$ is a Jordan-H\"older basis in $\gg$ 
such that $X_1,\dots,X_k$ is a Jordan-H\"older basis in $\hg$ for  
$k=\dim\hg$, then the following assertions hold true: 
\begin{itemize}
\item The coadjoint $G$-orbit $\Oc$ and the coadjoint $H$-orbit $\Oc\vert_{\hg}$ 
have the same set of jump indices $e\subseteq\{1,\dots,k\}$. 
In particular $\gg_e=\hg_e\subseteq\hg$ and $\Oc$ cannot be flat. 
\item We have the $H$-equivariant diffeomorphism $\Theta\colon\Oc\to\Oc\vert_{\hg}$, $\xi\mapsto\xi\vert_{\hg}$.  
It intertwines the orbital Fourier transforms $\Sc'(\Oc)\to\Sc'(\gg_e)=\Sc'(\hg_e)$ 
and $\Sc'(\Oc\vert_{\hg})\to\Sc'(\hg_e)$. 
Therefore, by using also the previous remark, we obtain
\begin{equation}\label{op}
(\forall a\in\Sc'(\Oc))\quad \Op_{\pi}(a)=\Op_{\pi\vert_H}(a\circ \Theta^{-1}).
\end{equation} 
\item The $H$-equivariant diffeomorphism $\Theta$ also induces a unital injective homomorphism 
of associative algebras 
$$\Diff(\Oc)\hookrightarrow\Diff(\Oc\vert_{\hg}),\quad D\mapsto D^{\hg},$$
such that $D(a\circ\Theta)=(D^{\hg}a)\circ\Theta$ for all $a\in\Ci(\Oc\vert_{\hg})$ 
and $D\in\Diff(\Oc)$. 
In particular, it follows that for $p\in\{0\}\cup[1,\infty]$ we get a continuous injective linear map
\begin{equation}\label{symb}
\Cc^{\infty,p}(\Oc)\hookrightarrow\Cc^{\infty,p}(\Oc\vert_{\hg}),\quad a\mapsto a\circ\Theta^{-1}
\end{equation}
where we use the Fr\'echet spaces $\Cc^{\infty,p}$ introduced in \cite[Th. 4.4]{BB12}. 
\end{itemize}
In particular, 
Proposition~\ref{new},  \eqref{op} and \eqref{symb}, 
along with Theorem~\ref{main_introd} prove the next corollary. 
\end{rem}

\begin{cor}\label{newcor}
If one of the equivalent conditions  of Proposition~\ref{new} holds true and if the orbit $\Oc\vert_{\hg}$ is flat, then 
for every $a\in \Cc^\infty_b(\Oc)$ we have $\Op_{\pi}(a)\in\Bc(\Hc)$, 
and moreover the Weyl-Pedersen calculus defines a continuous linear map 
$\Op_{\pi}\colon \Cc^{\infty}_b(\Oc)\to\Bc(\Hc)$.
\end{cor}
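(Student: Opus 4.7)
The plan is simply to chain together the three ingredients already isolated in Remark~\ref{new_rem} with Theorem~\ref{main_introd}. The strategy is to transfer the boundedness problem from the (non-flat) orbit $\Oc$ of $G$ to the flat orbit $\Oc\vert_\hg$ of $H$, where the main theorem of \cite{BB12} can be applied directly, and then transport the conclusion back by means of the $H$-equivariant diffeomorphism $\Theta$.

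Concretely, I would proceed in three steps. First, start with an arbitrary $a\in\Cc^\infty_b(\Oc)$ and invoke \eqref{symb} with $p=\infty$ to conclude that $a\circ\Theta^{-1}\in\Cc^\infty_b(\Oc\vert_\hg)$, with the assignment $a\mapsto a\circ\Theta^{-1}$ being continuous between the Fr\'echet spaces of symbols. Second, by Proposition~\ref{new}, $\pi\vert_H$ is a unitary irreducible representation of $H$ associated with $\Oc\vert_\hg$, and this orbit is flat by hypothesis; hence Theorem~\ref{main_introd}, applied to the group $H$ and the representation $\pi\vert_H$, provides a continuous linear map
\[
\Op_{\pi\vert_H}\colon\Cc^\infty_b(\Oc\vert_\hg)\to\Bc(\Hc).
\]
Third, the identity \eqref{op} gives $\Op_\pi(a)=\Op_{\pi\vert_H}(a\circ\Theta^{-1})$, so $\Op_\pi(a)\in\Bc(\Hc)$, and $\Op_\pi\colon\Cc^\infty_b(\Oc)\to\Bc(\Hc)$ is continuous as a composition of two continuous maps.

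There is no real technical obstacle at this stage, since the substantive content has been absorbed into the preceding remark. The only points one should check carefully are those packaged in Remark~\ref{new_rem}: that the homomorphism $\Diff(\Oc)\hookrightarrow\Diff(\Oc\vert_\hg)$, $D\mapsto D^\hg$, controls all the seminorms defining $\Cc^\infty_b(\Oc)$ by the corresponding seminorms on $\Cc^\infty_b(\Oc\vert_\hg)$ (this yields the continuity of \eqref{symb} with $p=\infty$), and that $\Theta$ intertwines the orbital Fourier transforms so that \eqref{op} is an honest identity of operators on $\Hc_\infty$. The genuinely hard input—that $\pi\vert_H$ is irreducible and associated with $\Oc\vert_\hg$—is Proposition~\ref{new}, which in this argument is used as a black box.
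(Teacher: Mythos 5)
Your argument is exactly the paper's: the authors state that Proposition~\ref{new}, together with the identity \eqref{op}, the embedding \eqref{symb} (with $p=\infty$, identifying $\Cc^{\infty,\infty}$ with $\Cc^\infty_b$), and Theorem~\ref{main_introd} applied to $H$ and $\pi\vert_H$ on the flat orbit $\Oc\vert_{\hg}$, prove the corollary. Your three-step chaining is a correct and faithful rendering of that proof.
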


Note that the coadjoint orbit of the representation $\pi$ in the above Corollary~\ref{newcor} 
may not be flat, and yet we have proved an $L^2$-boundedness assertion just like the one of Theorem~\ref{main_introd}. 
We now provide a specific example of a 3-step nilpotent Lie group taken from \cite{Ra85}, which illustrates this result. 

\begin{ex}\label{ratcl}
\normalfont
Recall from Example~\ref{heis} the Heisenberg algebra $\hg_{2n+1}=\RR^{2n}\times\RR$ 
with the Lie bracket given by $[(x,t),(x',t')]=(0,\omega(x,x'))$ for $x,x'\in\RR^{2n}$ and $t,t'\in\RR$, 
where $\omega\colon\RR^{2n}\times\RR^{2n}\to\RR$ is the symplectic form 
given by $\omega((q,p),(q',p'))=(p\mid q')-(p'\mid q)$ for $(q,p),(q',p')\in\RR^n\times\RR^n=\RR^{2n}$. 
It easily follows that if we consider the symplectic group 
$$\Sp(\RR^{2n},\omega)=\{T\in M_{2n}(\RR)\mid (\forall x,x'\in\RR^{2n})\quad \omega(Tx,Tx')=\omega(x,x')\}$$
then every $T\in\Sp(\RR^{2n},\omega)$ gives rise to an automorphism $\alpha_T\in\Aut(\hg_{2n+1})$ 
by the formula $\alpha_T(x,t)=(Tx,t)$ for all $x\in\RR^{2n}$ and $t\in\RR$. 
Moreover, the correspondence 
$\alpha\colon\Sp(\RR^{2n},\omega)\to\Aut(\hg_{2n+1})\simeq\Aut({\mathbb H}_{2n+1})$, $T\mapsto\alpha_T$, 
is a group homomorphism and is injective. 
On the other hand, if we write the elements of $\Sp(\RR^{2n},\omega)$ as $2\times 2$-block matrices 
with respect to the decomposition $\RR^{2n}=\RR^n\times\RR^n$, then it is well known that 
$\Sp(\RR^{2n},\omega)$ is a Lie group whose Lie algebra is 
$$\sp(\RR^{2n},\omega)=\Bigl\{
\begin{pmatrix}
\hfil A  & \hfil B \\
\hfil C & \hfil -A^\top
\end{pmatrix}\mid A, B=B^\top, C=C^\top\in M_n(\RR)\Bigr\}$$
where $B^\top$ stands for the transpose of a matrix~$B$. 
In particular, $\sp(\RR^{2n},\omega)$ has the following abelian Lie subalgebra 
$${\mathfrak s}_n(\RR)=\Bigl\{
\begin{pmatrix}
0  & 0 \\
C & 0
\end{pmatrix}\mid C=C^\top\in M_n(\RR)\Bigr\}$$
and the corresponding Lie subgroup of $\Sp(\RR^{2n},\omega)$ is the abelian Lie group
$$S_n(\RR)=\Bigl\{
\begin{pmatrix}
\1  & 0 \\
  C & \1
\end{pmatrix}\mid C=C^\top\in M_n(\RR)\Bigr\}.$$
It is easily seen that the group $S_n(\RR)$ acts (via $\alpha$) on $\hg_{2n+1}$ by unipotent automorphisms, 
and therefore the corresponding semidirect product $G=S_n(\RR)\ltimes_\alpha{\mathbb H}_{2n+1}$ is 
a nilpotent Lie group. 
Its Lie algebra is $\gg={\mathfrak s}_n(\RR)\ltimes\hg_{2n+1}$. 
If we denote by $\sym_n(\RR)$ the set of all symmetric matrices in $M_n(\RR)$ viewed as an abelian Lie algebra, 
then we have an isomorphism of Lie algebras $\gg\simeq\sym_n(\RR)\ltimes\hg_{2n+1}$ 
with the Lie bracket given by 
$$[(C,q,p,t),(C',q',p',t')]=(0,0,Cq'-C'q,(p\mid q')-(p'\mid q)) $$
for $C,C'\in\sym_n(\RR)$, $q,q',p,p'\in\RR^n$ and $t,t'\in\RR$. 
It easily follows by the above formula that $[\gg,[\gg,[\gg,\gg]]]=\{0\}$ 
hence $\gg$ is a 3-step nilpotent Lie algebra. 
Moreover, $\hg_{2n+1}\simeq\{0\}\times\hg_{2n+1}$ is an ideal of $\gg$ and 
it was proved in \cite{Ra85} that for every coadjoint $G$-orbit $\Oc\subseteq\gg^*$ of maximal dimension 
we have $\dim\Oc=n$ and moreover the mapping $\xi\mapsto\xi\vert_{\hg_{2n+1}}$ 
gives a diffeomorphism of $\Oc$ onto a coadjoint ${\mathbb H}_{2n+1}$-orbit $\Oc\vert_{\hg_{2n+1}}$. 
Thus Proposition~\ref{new} and Remark~\ref{new_rem} apply 
for the Weyl-Pedersen calculus of a unitary irreducible representation~$\pi\colon\gg\to\Bc(\Hc)$ 
associated with the coadjoint orbit~$\Oc$. 
Note that this result is similar to, and yet quite different from \cite[Th. 8.6--8.7]{Ra85}, 
inasmuch as we work here with an irreducible representation (see Definition~\ref{ped}) 
instead of the regular representation 
of~$G$ (see Example~\ref{reg}).  
\end{ex}


\begin{thebibliography}{1}

\bibitem
{An69} 
R.F.V.~Anderson, 
\textit{The Weyl functional calculus}. 
J. Functional Analysis
\textbf{4}(1969),  240--267.

\bibitem
{An72} 
R.F.V.~Anderson, 
\textit{The multiplicative Weyl functional calculus}. 
J. Functional Analysis
\textbf{9}(1972),  423--440.


\bibitem{Be77} 
R.~Beals, 
\textit{Characterization of pseudodifferential operators and applications}. 
Duke Math. J. \textbf{44} (1977), no.~1, 45--57. 

\bibitem
{Bec04}
A.~Bechata, 
\textit{Calcul pseudodiff\'erentiel $p$-adique}. 
Ann. Fac. Sci. Toulouse Math. (6) \textbf{13} (2004), no.~2, 179--240. 

\bibitem
{BB11a}
I.~Belti\c t\u a, D.~Belti\c t\u a, 
\textit{Continuity of magnetic Weyl calculus}. 
J. Funct. Anal. \textbf{260} (2011), no.~7, 1944--1968.

\bibitem
{BB11b}
I.~Belti\c t\u a, D.~Belti\c t\u a, 
\textit{Modulation spaces of symbols for representations of nilpotent Lie groups}. 
J. Fourier Anal. Appl. \textbf{17} (2011), no.~2, 290--319. 

\bibitem
{BB12}
I.~Belti\c t\u a, D.~Belti\c t\u a, 
\textit{Boundedness for Weyl-Pedersen calculus on flat coadjoint orbits}. 
Preprint arXiv:1203.0974v1 [math.AP], 2012. 


\bibitem{Bo97}
J.-M.~Bony, 
\textit{Caract\'erisations des op\'erateurs pseudo-diff\'erentiels}. 
In: S\'eminaire sur les \'Equations aux D\'eriv\'ees Partielles, 1996--1997, Exp. No. XXIII, 17 pp., \'Ecole Polytech., Palaiseau, 1997.

\bibitem{CV72} 
A.-P.~Calder\'on, R.~Vaillancourt, 
\textit{A class of bounded pseudo-differential operators}.
Proc. Nat. Acad. Sci. U.S.A. \textbf{69} (1972), 1185–-1187.

\bibitem
{CW73}
R.R.~Coifman, G.~Weiss, 
\textit{Operators associated with representations of amenable groups, 
singular integrals induced by ergodic flows, the rotation method and multipliers}. 
Studia Math. \textbf{47} (1973), 285--303. 

\bibitem
{CG90}
L.J.~Corwin, F.P.~Greenleaf, 
\textit{Representations of Nilpotent Lie Groups and Their Applications}. Part I. Basic theory and examples. 
Cambridge Studies in Advanced Mathematics, 18. Cambridge University Press, Cambridge, 1990. 

\bibitem
{Gl04} 
P.~G\l owacki, 
\textit{A symbolic calculus and $L^2$-boundedness on nilpotent Lie groups}. 
J. Funct. Anal. \textbf{206} (2004), no.~1, 233--251. 

\bibitem
{Gl07} 
P.~G\l owacki, 
\textit{The Melin calculus for general homogeneous groups}. 
Ark. Mat. \textbf{45} (2007), no.~1, 31--48.

\bibitem
{Go02}
M.~Gouleau, 
\textit{Alg\`ebre de Lie nilpotente graduée de rang 3 et inverse d'un op\'erateur diff\'erentiel}. 
J. Lie Theory \textbf{12} (2002), no.~2, 325--356.

\bibitem
{Ha93}
S.~Haran, 
\textit{Quantizations and symbolic calculus over the $p$-adic numbers}. 
Ann. Inst. Fourier (Grenoble) \textbf{43} (1993), no.~4, 997--1053. 

\bibitem
{Hor79}
L.~H\"ormander, 
\textit{The Weyl calculus of pseudodifferential operators}. 
Comm. Pure Appl. Math. \textbf{32} (1979), no.~3, 360--444.

\bibitem
{Hor07}
L.~H\"ormander, 
\textit{The Analysis of Linear Partial Differential Operators}. III. 
Pseudo-differential operators. Reprint of the 1994 edition. 
Classics in Mathematics. Springer, Berlin, 2007.

\bibitem
{How84}
R.~Howe, 
\textit{A symbolic calculus for nilpotent groups}. 
in: Operator Algebras and Group Representations (Neptun, 1980). Vol. I. 
Monogr. Stud. Math., 17, Pitman, Boston, MA, 1984, pp.~254--277.

\bibitem
{IMP07}
V.~Iftimie, M.~M\u antoiu, R.~Purice, 
\textit{Magnetic pseudodifferential operators}. 
Publ. Res. Inst. Math. Sci. \textbf{43} (2007), no.~3, 585--623.

\bibitem
{Ma91}
D.~Manchon, 
\textit{Calcul symbolique sur les groupes de Lie nilpotents et applications}. 
J. Funct. Anal. \textbf{102} (1991), no.~1, 206--251. 

\bibitem
{Me83}
A.~Melin, 
\textit{Parametrix constructions for right invariant differential operators on nilpotent groups}. 
Ann. Global Anal. Geom. \textbf{1} (1983), no.~1, 79--130.

\bibitem{Mi82}
K.G.~Miller, 
\textit{Invariant pseudodifferential operators on two-step nilpotent Lie groups.}
Michigan Math. J. \textbf{29} (1982), no.~3, 315--328.

\bibitem{Mi86}
K.G.~Miller,  
\textit{Invariant pseudodifferential operators on two-step nilpotent Lie groups. II.}
 Michigan Math. J. \textbf{33} (1986), no.~3, 395--401. 


\bibitem
{Pe88}
N.V.~Pedersen, 
\textit{Geometric quantization and nilpotent Lie groups: A collection of examples}.
Preprint, University of Copenhagen, Denmark, 1988. 

\bibitem
{Pe94}
N.V.~Pedersen, 
\textit{Matrix coefficients and a Weyl correspondence for nilpotent Lie groups}. 
Invent. Math. \textbf{118} (1994), no.~1, 1--36. 

\bibitem
{Ra85}
G.~Ratcliff, 
\textit{Symbols and orbits for $3$-step nilpotent Lie groups}. 
J. Funct. Anal. \textbf{62} (1985), no.~1, 38--64.

\end{thebibliography}
\end{document}